\def \C {\mathbb{C}}
\def \D {\mathbb{D}}
\def \R {\mathbb{R}}
\def \T {\mathbb{T}}
\def \N {\mathbb{N}}
\def\d{\mathrm{d}}
\def\hol{\mathrm{Hol}}
\newtheorem{theorem}{Theorem}[section]
\newtheorem{lemma}[theorem]{Lemma}
\newtheorem{corollary}[theorem]{Corollary}
\begin{document}
	
	\title{A Note on the Phase Retrieval of Holomorphic Functions}

	\author[R. Perez III]{Rolando Perez III}
	
	\address{R. Perez III, Universit\'e de Bordeaux, CNRS, Bordeaux INP, IMB, UMR 5251,  F-33400, Talence, France}
	
	\address{Institute of Mathematics, University of the Philippines Diliman, 1101 Quezon City, Philippines}
	
	\email{roperez@u-bordeaux.fr}
	
	\keywords{phase retrieval, holomorphic functions}
	
	\subjclass[2010]{30D05, 30H10, 30H15, 94A12}

	\begin{abstract}
		
		We prove that if $f$ and $g$ are holomorphic functions on an open connected domain, with the same moduli on two intersecting segments, then $f=g$ up to the multiplication of a unimodular constant,
		provided the segments make an angle that is an irrational multiple of $\pi$. We also prove that if $f$ and $g$ are functions in the Nevanlinna class, and if $|f|=|g|$ on the unit circle and on a circle inside the unit disc, then $f=g$ up to the multiplication of a unimodular constant.
		
		
	\end{abstract}
	
	\maketitle 
	
	\section{Introduction}
	
	The study of phase retrieval involves the recovery of a function $f$ in some function space from given data about the magnitude of $f$ (phaseless information) and other assumptions on $f$, where these other assumptions may be in terms of some transform of $f$. Phase retrieval problems are widely studied because of their physical applications in fields of science and engineering. The most natural question asked on phase retrieval is about the uniqueness of the solution. However, phase retrieval problems generally have large solution sets, so additional assumptions related to the known data are usually added to reduce the solution set, or to consequently force the uniqueness of the solution. We refer the reader to the survey articles by Grohs et. al. \cite{GKR} and Klibanov et. al. \cite {Kli} for a more general perspective on the phase retrieval problem, together with some examples and further references.
	
	Phase retrieval problems have been formulated in both finite and infinite-dimensional cases. In turn, they have been solved using a diverse array of techniques, which include the use of tools from complex analysis. In some cases, the  problem shifts to the complex analytic scenario by holomorphic extensions or by integral transforms.
For instance, consider $f$ and $g$ to be two band-limited $L^2$ functions, then from the Paley-Wiener theorem, they are entire functions of finite type. Complex analysis was the key tool used by  Akutowicz  \cite{Ak1,Ak2}
(and  a  few  years  later  independently  by  Walther \cite{Wa} and Hofstetter \cite{Ho}) to determine all band-limited functions $g$ such that $|g|=|f|$. We now enumerate some further work
which used complex analytic tools. Grohs et. al. \cite{Gr} considered the recovery of a function in a modulation space from phaseless Gabor measurements, where they considered the short-time Fourier transform and used the Poisson-Jensen formula in their estimates. Waldspurger et. al. \cite{MW} solved a continuous case of the recovery of an $L^2$ function from the modulus of its wavelet transform by using Fourier transforms and holomorphic extensions to the upper half-plane. 
Moreover, McDonald \cite{Mc} has extended the work of Akutowicz to cover entire functions of finite genus.
The solutions were characterized with the help of Hadamard factorization. In \cite{Ja}, Jaming also used the Hadamard factorization to show that an entire function of finite order can be reconstructed from its modulus on two lines, where these lines intersect at an angle which is an irrational multiple of $\pi$. Bodmann et. al. \cite{Bod} then used this result with conformal mappings to show that a polynomial of degree at most $n-1$ can be determined by its magnitude at $4n-4$ well-chosen points in the complex plane. More information on phase retrieval problems from holomorphic measurements can be found in \cite{GKR}.
	
	Our aim in this paper is to generalize some uniqueness results on the phase retrieval of holomorphic functions on the unit disc. Indeed, in \cite{JKP1}, we considered the phase retrieval problem: given 
	$f\in L^2(\R)$ such that its Fourier transform satisfies an exponential decay condition, 
	find all functions $g\in L^2(\R)$ such that $|f|=|g|$ on $\R$ and with its Fourier transform satisfying the same exponential decay condition as $f$. 
	Using a Paley-Wiener theorem and a conformal map,  the problem can be translated to the Hardy space on the unit disc. By the inner-outer factorization, the explicit form of the solution was obtained. One of our motivation stems from one of the coupled phase retrieval problems from \cite[Lemma 4.5]{JKP1}, which states that for $f$ and $g$ in the Hardy space on the unit disc with $|f|=|g|$ on $(-1,1)$ and on some segment inside the disc, $g$ can be obtained uniquely from $f$. For our first result, we extend this uniqueness result to holomorphic functions on open connected domains.
	
	On the other hand, our next objective is to improve the result of  Boche et. al. \cite[Theorem 3]{Bo}, which states that given functions $f$ and $g$ in the Hardy space on the disc without singular inner part, if $|f|=|g|$ on the unit circle and on a smaller circle inside the unit circle, then $g$ can be uniquely determined from $f$. Moreover, they also showed by construction that the Blaschke product associated with $g$ can be uniquely recovered by its modulus on a smaller circle inside the unit circle. For our other result, we extend this uniqueness result to all functions in the Nevanlinna class, regardless of the presence of the singular parts.
	
	This work is organized as follows. Section 2 includes a quick review of definitions and results related to spaces of holomorphic functions on the disc, and the statements of our results. Section 3 is devoted to the proofs of our results.

\section{Preliminaries and Statement of Results}

\subsection{The Nevanlinna Class and the Hardy Space on the Disc}

Let $\mathbb D$ be the unit disc and $\T:=\partial\D$ be its boundary. We denote by $D(a,r)$ the disc centered at $a\in\C$ with radius $r>0$, $r\D=D(0,r)$ and $r\T=\partial D(0,r)$. 
We denote by $H^{\infty}(\D)$ the space of bounded holomorphic functions on $\D$. The Nevanlinna class is defined by
$$
\mathcal N=\left\{ \varphi\in\hol(\D): \varphi=\dfrac{f}{g},~f,g\in H^{\infty}(\D) \right\}.
$$
Note that the radial limit given by $
\varphi^*(\zeta)=\lim_{r\rightarrow 1} \varphi(r\zeta)$ exists almost everywhere in $\T$ and $\log|\varphi^*|\in L^1(\T)$. For $\varphi\in\mathcal N$, $\varphi$ has a factorization \cite[Theorem 2.9]{Du} of the form
\begin{equation}
\label{eq:nevfact}
\varphi=\dfrac{e^{i\gamma}B_\varphi S_{\nu_1}O_\varphi}{S_{\nu_2}}
\end{equation}
where $e^{i\gamma}\in\T$, $B_\varphi$ is the Blaschke product formed from the zeros of $\varphi$, $S_{\nu_1}$ and $S_{\nu_2}$ are singular inner functions, and $O_\varphi$ is the outer part of $\varphi$. Here, the Blaschke product is defined for all $z\in\mathbb D$ as 
\begin{equation*}
\label{eq:blaschke}
B_\varphi (z)=z^k\prod_{\alpha\in \Lambda}\dfrac{\alpha}{|\alpha|}\dfrac{\alpha-z}{1-\bar{\alpha}z}
\end{equation*}
where $\Lambda$ is the set of nonzero zeros of $\varphi$ counted with multiplicity, which satisfy the Blaschke condition $\sum_{\alpha\in\Lambda} (1-|\alpha|)<\infty$.
The singular inner function is given by
\begin{equation*}
\label{eq:singinner}
S_\nu(z)=\exp\left(\int_{\mathbb T}\dfrac{z+\zeta}{z-\zeta}~\d\nu(\zeta)\right),
\end{equation*}
where $\nu$ is a finite positive singular measure on $\T$. Finally, the outer part of $\varphi$ is given by
\begin{equation*}
\label{eq:outer} O_\varphi(z)=\exp\left(\dfrac{1}{2\pi}\int_{\T}\dfrac{\zeta+z}{\zeta-z}\log|\varphi^*(\zeta)|~|\d\zeta|\right).
\end{equation*}
It is easy to see that for $f\in\mathcal{N}$, if $f(z_0)=0$ then $\dfrac{f(z)}{z-z_0}\in\mathcal{N}$.
We also recall the subclass of $\mathcal N$ called the Smirnov class, defined by
$$
\mathcal N^+=\left\{ \varphi\in\hol(\D): \varphi=\dfrac{f}{g},~f,g\in H^{\infty}(\D),\,g\text{ is outer}\right\}.
$$ 
Furthermore, the Generalized Maximum Principle \cite[Section 3.3.1, (g)]{Ni} states that if $\varphi\in\mathcal N^+$ with radial limit $\varphi^*\in L^p(\T)$, then $\varphi$ belongs to classical Hardy space on the disc $H^p(\D)$, $1\leq p\leq \infty$.
Note that every function in $H^p(\D)$ has a factorization as in \eqref{eq:nevfact} with $S_{\nu_2}=1$, and this factorization is unique.

\subsection{Statement of Results} 

We begin with a simple observation. Let $\omega, \Omega$ be open connected
sets such that $\omega\subset \Omega$, let  $f,g\in\hol(\Omega)$ and suppose that $|f|=|g|$ on $\omega$.
Then,  for some $c\in \T$, $g =cf$ on $\Omega$.
Indeed, we can assume that  $|f|=|g|$ on a closed disc
$\overline{D}$, hence $f$ and $g$ have the same zeros with the same
multiplicities on $\overline{D}$. Consequently, $F=f/g$ is a
holomorphic function on $D$ and $|F|=1$. Therefore $0=\Delta
|F^2|=|F'|^2$ on $D$ and hence $F=c$ for some $c\in \T$.

In the same spirit when $\omega$ is not open, we have shown in the paper \cite[Lemma 4.5]{JKP1} that if $f,g\in H^2(\D)$ and 
$$
|f(z)|=|g(z)|,\qquad z\in(-1,1)\cup e^{i\alpha}(-1,1)
$$
where $\alpha\notin \pi\mathbb Q$, then $f$ and $g$ are equal up to the multiplication of a unimodular constant. For this result, uniqueness was established by showing that the Blaschke products, singular inner parts, and outer parts of $f$ and $g$ are equal. Our first result consists in showing that this is true for arbitrary holomorphic functions in an open connected domain.
\begin{theorem}
	\label{thm:gen-omega}
	Let $\Omega$ be an open connected domain.  Let $f,g\in\hol(\Omega)$ and suppose that
	\begin{equation}
	\label{eq:gen-omega1}
	|f(z)|=|g(z)|,\qquad z\in I\cup I_\alpha,
	\end{equation}
	where $I$ and $I_\alpha$ are segments inside $\Omega$, $I_\alpha$ is the $\alpha$-rotation of $I$ about the midpoint of $I$, and $\alpha\notin \pi\mathbb Q$. Then $g(z)=cf(z)$ for all $z\in\Omega$ and for some $c\in\T$.
\end{theorem}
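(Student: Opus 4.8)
The plan is to localize the problem near the common intersection point of the two segments, where a single disc is simultaneously symmetric for the reflections attached to \emph{both} segments, and then to run a reflection-and-rotation argument in the spirit of Jaming. First I would normalize the geometry. Since $I_\alpha$ is the $\alpha$-rotation of $I$ about the midpoint $m$ of $I$, the two segments meet precisely at $m$. Replacing $f,g$ by $f(a\,\cdot\,+m)$ and $g(a\,\cdot\,+m)$ for a suitable $a\in\T$ — a holomorphic change of variable preserving holomorphy, the modulus equality, segments, and the angle between them — I may assume $m=0$ and $I\subset\R$, so that $I_\alpha\subset e^{i\alpha}\R$. If $f\equiv0$, then $g\equiv0$ on $I$ and hence on $\Omega$, and the statement is trivial; so assume $f,g\not\equiv0$. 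Because $\Omega$ is open and $0\in\Omega$, I fix $\rho>0$ with $D:=D(0,\rho)\subset\Omega$, $(-\rho,\rho)\subset I$ and $e^{i\alpha}(-\rho,\rho)\subset I_\alpha$. The disc $D$ is invariant under $z\mapsto\bar z$ and under $z\mapsto e^{2i\alpha}\bar z$, which is exactly what the localized argument needs.

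Next I introduce the two reflected functions
\[
f^{\sharp}(z)=\overline{f(\bar z)},\qquad f^{\sharp}_{\alpha}(z)=\overline{f(e^{2i\alpha}\bar z)},\qquad z\in D,
\]
and likewise $g^{\sharp},g^{\sharp}_\alpha$. Writing $f(w)=\sum_n a_n w^n$ gives $f^{\sharp}(z)=\sum_n\overline{a_n}z^n$ and $f^{\sharp}_\alpha(z)=\sum_n\overline{a_n}e^{-2in\alpha}z^n$, so both are holomorphic on $D$. For real $t\in(-\rho,\rho)$ one has $f^{\sharp}(t)=\overline{f(t)}$, whence $f(t)f^{\sharp}(t)=|f(t)|^2$; and for $w=e^{i\alpha}t$ one computes $f^{\sharp}_\alpha(w)=\overline{f(e^{i\alpha}t)}=\overline{f(w)}$, whence $f(w)f^{\sharp}_\alpha(w)=|f(w)|^2$. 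The hypothesis $|f|=|g|$ thus gives $ff^{\sharp}=gg^{\sharp}$ on $(-\rho,\rho)$ and $ff^{\sharp}_\alpha=gg^{\sharp}_\alpha$ on $e^{i\alpha}(-\rho,\rho)$. Since each of these sets has an accumulation point in $D$ and all functions involved are holomorphic on $D$, the identity theorem upgrades both to identities on all of $D$:
\[
f\,f^{\sharp}=g\,g^{\sharp}\quad\text{and}\quad f\,f^{\sharp}_\alpha=g\,g^{\sharp}_\alpha\qquad\text{on }D.
\]

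Finally I would combine the two identities. Put $h=f/g$, a meromorphic function on $D$ with $h\not\equiv0,\infty$. The first identity reads $h(z)=g^{\sharp}(z)/f^{\sharp}(z)=1/\overline{h(\bar z)}$, the second reads $h(z)=1/\overline{h(e^{2i\alpha}\bar z)}$, and comparing them yields $h(\bar z)=h(e^{2i\alpha}\bar z)$, that is, $h(w)=h(e^{2i\alpha}w)$ for all $w\in D$, an equality of meromorphic functions. Expanding $h$ in a Laurent series $\sum_n c_n w^n$ about $0$ and matching coefficients gives $c_n\bigl(1-e^{2in\alpha}\bigr)=0$ for every $n$; since $\alpha\notin\pi\mathbb{Q}$ we have $e^{2in\alpha}\neq1$ for all $n\neq0$, forcing $c_n=0$ for $n\neq0$. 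Hence $h\equiv c_0$ is constant on $D$, so $f=c_0 g$ on $D$, and by the identity theorem on the connected set $\Omega$ this persists on all of $\Omega$. Evaluating $|f|=|g|$ at a point of $I$ where $g\neq0$ shows $|c_0|=1$, and taking $c=c_0^{-1}\in\T$ gives $g=cf$ on $\Omega$.

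I expect the main obstacle to be conceptual rather than computational: for a general domain $\Omega$ the reflected functions $f^{\sharp},f^{\sharp}_\alpha$ a priori live on different reflected copies of $\Omega$, and there is no reason for $\Omega$ itself to be invariant under the relevant reflections or under the rotation $w\mapsto e^{2i\alpha}w$. Restricting to the disc $D$ centred at the common point of the two segments removes this difficulty, since $D$ is simultaneously symmetric for both reflections and for that rotation; after this reduction the remaining steps — the identity theorem and the Laurent comparison exploiting the irrationality of $\alpha/\pi$ — are routine.
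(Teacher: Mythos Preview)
Your proof is correct, and it takes a genuinely different route from the paper's own argument.

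The paper first shrinks the disc so that $f$ and $g$ have at most a single zero (at the origin), writes $f=z^{k}e^{\varphi}$, $g=z^{l}e^{\psi}$ with $\varphi,\psi$ holomorphic, reduces the hypothesis to $\mathrm{Re}\,\varphi=\mathrm{Re}\,\psi$ on $(-1,1)$ and on $e^{i\alpha}(-1,1)$, and then compares Taylor coefficients of these harmonic functions: the two segment conditions become $\langle\widehat\varphi(n)-\widehat\psi(n),1\rangle=0$ and $\langle\widehat\varphi(n)-\widehat\psi(n),e^{-in\alpha}\rangle=0$, and since $\{1,e^{-in\alpha}\}$ is an $\R$-basis of $\C$ for $n\neq0$ this forces $\widehat\varphi(n)=\widehat\psi(n)$. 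Your argument instead holomorphizes $|f|^2=|g|^2$ on each segment via Schwarz reflection into the identities $ff^{\sharp}=gg^{\sharp}$ and $ff^{\sharp}_{\alpha}=gg^{\sharp}_{\alpha}$ on the whole disc, passes to the meromorphic quotient $h=f/g$, and extracts the rotational functional equation $h(w)=h(e^{2i\alpha}w)$, which a Laurent expansion at $0$ kills. Your approach avoids the preliminary zero-reduction and the logarithm altogether, since zeros of $f$ and $g$ are absorbed harmlessly into the poles and zeros of $h$; it is closer in spirit to the reflection arguments used for entire functions in the references. The paper's approach, on the other hand, stays entirely within holomorphic and harmonic functions and makes the role of the irrationality of $\alpha/\pi$ visible as a real-linear independence statement. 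Both are short and elementary; yours is arguably the slicker of the two.
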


%
%

Let us now see what is happening if segments are replaced by circles. To do so, recall that if $f$ and $g$ are outer functions in $\mathcal N$ such that $|f|=|g|$ almost everywhere on $\T$, then $f$ is equal to $g$ up to the multiplication of a unimodular constant. Now, Boche et. al. \cite[Theorem 3]{Bo} solved a more general problem: if $f,g\in H^1(\D)$ have no singular parts (i.e. $f=B_fO_f$, $g=B_gO_g$) and $|f|=|g|$ almost everywhere on $\T$ and $|f|=|g|$ on $\rho\T$ for some $0<\rho<1$, then $g$ is uniquely determined by $f$. The heart of their proof is the explicit construction of the Blaschke product associated to $g$, as the equality of the outer parts immediately follow. For our next result, with the same equalities of the moduli on the aformentioned circles, we improve the result by Boche et. al. by showing that uniqueness holds for all functions in $\mathcal N$. We emphasize that in this result, we may either have the presence or the absence of the singular inner part. 

\begin{theorem}
	\label{thm:2circ}
	Let $f,g\in\mathcal N$ and let $\rho\in(0,1)$. If 
	\begin{equation}
	\label{eq:2circ}
	|f(\zeta)|=|g(\zeta)|,\quad\text{a.e. }\zeta\in\T\quad\text{and}\quad |f(z)|=|g(z)|,\quad z\in\rho\T
	\end{equation}
	then $g(z)=cf(z)$ for all $z\in\D$ and for some $c\in\T$.
\end{theorem}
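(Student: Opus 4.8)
The plan is to reduce everything to the single meromorphic function $F:=f/g$ and to promote the two modulus equalities into an exact dilation symmetry of $F$, from which constancy will follow because the zeros and poles of $F$ lie in Blaschke sequences and hence cannot accumulate at the interior point $0$.

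First I would exploit the boundary condition. Since $f,g\in\mathcal N$, their radial limits exist a.e.\ with $\log|f^*|,\log|g^*|\in L^1(\T)$, so the hypothesis $|f^*|=|g^*|$ a.e.\ makes the outer factors coincide, $O_f=O_g$, directly from the integral formula for the outer part. Writing the factorizations \eqref{eq:nevfact} of $f$ and $g$, the outer parts cancel in the quotient, so
\[
F=\frac{f}{g}=e^{i\gamma}\,\frac{B_f\,S_{\nu_1^f}\,S_{\nu_2^g}}{B_g\,S_{\nu_1^g}\,S_{\nu_2^f}}
\]
is, up to a unimodular constant, a quotient of two inner functions. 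In particular $F$ is meromorphic of bounded type on $\D$ with $|F^*|=1$ a.e.\ on $\T$; its poles lie among the zeros of $B_g$ and its zeros among the zeros of $B_f$, and since singular inner factors never vanish, every pole and zero of $F$ belongs to a Blaschke sequence and can accumulate only on $\T$.

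Next I would reflect across the interior circle $\rho\T$. Because $|F|=1$ there, $F$ has no zero or pole on $\rho\T$, hence is holomorphic and nonvanishing in a neighbourhood of it, and the Schwarz reflection principle gives the identity $F(z)\,\overline{F(\rho^2/\bar z)}=1$ throughout the annulus $\{\rho^2<|z|<1\}$. The main obstacle is that I cannot reflect in the same way across $\T$: the Blaschke zeros may accumulate there, so $F$ need not be analytic across $\T$. I would circumvent this by \emph{propagating} the boundary equality inward. Taking radial limits in the reflection identity, $|F(re^{i\theta})|\to 1$ for a.e.\ $\theta$ forces $|F(\rho^2 e^{i\theta})|=1$ for a.e.\ $\theta$; since any pole or zero on the interior circle $\rho^2\T$ would make the modulus blow up or vanish near it, continuity upgrades this to $|F|=1$ everywhere on $\rho^2\T$.

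Finally I would combine the two interior circles on which $|F|=1$. Reflection across $\rho\T$ and across $\rho^2\T$ give $\overline{F(\rho^2/\bar z)}=1/F(z)=\overline{F(\rho^4/\bar z)}$, whence $F(w)=F(\rho^2 w)$: the function $F$ is invariant under the dilation $w\mapsto\rho^2 w$. The accumulation principle now closes the argument: if $F$ had a pole at some $b\in\D\setminus\{0\}$, then $F(\rho^{2n}b)=\infty$ for all $n$, producing poles accumulating at $0$, which is impossible for a Blaschke sequence; a pole or zero at $0$ is excluded the same way using $F(\rho^{2n}z_0)=F(z_0)$. Hence $F$ is holomorphic and zero-free on $\D$, and continuity at $0$ gives $F(z)=\lim_{n}F(\rho^{2n}z)=F(0)$ for every $z$, so $F\equiv c$; evaluating on $\rho\T$ shows $|c|=1$, i.e.\ $g=cf$. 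I emphasise that this last step disposes of the singular inner factors automatically, so the presence or absence of singular parts never requires separate treatment.
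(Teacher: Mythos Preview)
Your proof is correct, and it follows a genuinely different path from the paper's. The paper never forms the global quotient $F=f/g$; instead it works on the small disc $\rho\D$, applies the Poisson--Jensen formula to show that $f/P_f$ and $g/P_g$ (with $P_f,P_g$ the finite Blaschke products for $\rho\D$ built from the zeros of $f,g$ inside $\rho\D$) have equal moduli on all of $\rho\D$, hence differ by a unimodular constant there and thus on $\D$. The boundary condition on $\T$ then reduces matters to $|P_f|=|P_g|$ on $\T\cup\rho\T$, and a separate lemma on rational (more generally meromorphic) functions with equal moduli on two concentric circles finishes the job. Your route---cancel outer parts, reflect $F$ across $\rho\T$, push the a.e.\ boundary equality inward to $\rho^2\T$, reflect again to obtain the dilation symmetry $F(\rho^2 w)=F(w)$, and invoke the Blaschke condition to rule out zeros/poles accumulating at $0$---is more self-contained: it bypasses Poisson--Jensen and the auxiliary lemma entirely, and it handles the singular inner factors without ever naming them. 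The paper's approach, on the other hand, isolates a reusable statement (the two-circle lemma for meromorphic functions on $\C$) that also yields a corollary for arbitrary meromorphic functions, which your argument does not directly give.

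One small point worth tightening: when you assert ``$|F|=1$ on $\rho\T$, hence $F$ has no zero or pole there,'' you are implicitly using that if $g$ vanishes at some $z_0\in\rho\T$ then so does $f$, with the same multiplicity, so that $F$ extends holomorphically across $z_0$. This is true (the hypothesis $|f|=|g|$ on the arc forces matching orders), and indeed your own continuity argument---a pole or zero of the meromorphic $F$ on $\rho\T$ would contradict $|F|=1$ on the rest of the circle---already delivers it; but the paper makes this step explicit, and it is worth a sentence.
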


\section{Proofs and Remarks}

In this section, we present the proofs of our  results, and some immediate consequences of them. 

\subsection{Proof of Theorem \ref{thm:gen-omega}}
Observe that replacing $f(z)$ by $f(z_0+rze^{i\beta})$ with $z_0,r,\beta$ appropriately chosen,
we may assume that

-- $(1+\varepsilon)\overline{\D}\subset\Omega$ for $\varepsilon>0$,

-- $I=(-1,1)$, $I_\alpha=e^{i\alpha}(-1,1)$.

Note that now $f,g\in H^2(\D)$ so that we could apply \cite[Lemma 4.5]{JKP1} and obtain $g=cf$, for some $c\in \T$.
We will give an alternative simpler proof.

Note that, as the zeros of $f$ and $g$ are isolated, by choosing $r$ small enough, we can assume that
they have at most one zero in  $(1+\varepsilon)\overline{\D}$ which is at $0$. We can write
\begin{equation}
\label{eq:gen-omegax}
f(z)=z^ke^{\varphi(z)}\qquad\text{and}\qquad g(z)=z^l e^{\psi(z)},\qquad  z\in \overline{\D}
\end{equation}
where $\varphi,\psi\in \hol((1+\varepsilon)\D)$ and $k,l$ are nonnegative integers.
As $|f(x)|=|g(x)|$ for $x\in(-1,1)$ we conclude that $k=l$.

It remains to show that the zero-free factors of $f$ and $g$ are equal up to a unimodular constant. First, we note that \eqref{eq:gen-omega1} is equivalent to 
\begin{equation*}
\text{Re}\,\varphi(t)=\text{Re}\,\psi(t)\qquad\text{and}\qquad\text{Re}\,\varphi(te^{i\alpha})=\text{Re}\,\psi(te^{i\alpha}),\qquad t\in(-1,1).
\end{equation*}
Since $\varphi,\psi\in\hol(\D)\cap C^\infty(\overline{\D})$, $\text{Re}\,\varphi$ and $\text{Re}\,\psi$ are harmonic and
\begin{equation}
\label{eq:phipsi}
\varphi(z)=\sum_{n\ge 0} |z|^n\widehat{\varphi}(n)e^{in\theta}
\qquad\text{and}\qquad
\psi(z)=\sum_{n\ge 0} |z|^n\widehat{\psi}(n)e^{in\theta}
\end{equation}
for $z=|z|e^{i\theta}\in\D$. It follows that
\begin{align*}
\text{Re}\,\varphi(z)&=\text{Re}\,\widehat{\varphi}(0)+\sum_{n\in\N} |z|^n\dfrac{\widehat{\varphi}(n)e^{in\theta}+\overline{\widehat{\varphi}(n)}e^{-in\theta}}{2}
\intertext{and}
\text{Re}\,\psi(z)&=\text{Re}\,\widehat{\psi}(0)+\sum_{n\in\N} |z|^n\dfrac{\widehat{\psi}(n)e^{in\theta}+\overline{\widehat{\psi}(n)}e^{-in\theta}}{2}.
\end{align*}
Thus \eqref{eq:gen-omega1} and \eqref{eq:gen-omegax} together with the sums above imply that
$\text{Re}\,\varphi(t)=\text{Re}\,\psi(t)$  for $t\in(-1,1)$, if and only if 
$$
\text{Re}\,\widehat{\varphi}(n)=\text{Re}\,\widehat{\psi}(n),\qquad n\in\N,
$$
and $\text{Re}\,\varphi(te^{i\alpha})=\text{Re}\,\psi(te^{i\alpha})$ if and only if 
$$
\text{Re}\,\widehat{\varphi}(0)=\text{Re}\widehat{\psi}(0)
\qquad\text{and}\qquad
\text{Re}\,(\widehat{\varphi}(n)e^{in\alpha})=\text{Re}\,(\widehat{\psi}(n)e^{in\alpha})
$$
for all $n\in\N$. In other words
\begin{align}
\label{eq:rho}
\begin{split}
\begin{cases}
\text{Re}\,\left(\widehat{\varphi}(n)-\widehat{\psi}(n)\right)=\left< \widehat{\varphi}(n)-\widehat{\psi}(n),1\right>_\C=0,                                      \\[2mm]
\text{Re}\,\left(\left(\widehat{\varphi}(n)-\widehat{\psi}(n)\right)e^{in\alpha}\right)
=\left<\widehat{\varphi}(n)-\widehat{\psi}(n) ,e^{-in\alpha}\right>_\C=0,
\end{cases}
\end{split}
\end{align}
for all $n\in\N$. Since $\alpha\notin\pi\mathbb Q$, $\{1,e^{-in\alpha}\}$ is a basis for $\C$ when $n\not=0$
so by \eqref{eq:rho}, we have $\widehat{\varphi}(n)=\widehat{\psi}(n)$.
On the other hand, as $\text{Re}\,\widehat{\varphi}(0)=\text{Re}\,\widehat{\psi}(0)$ there exists
$\lambda\in\R$ such that $\widehat{\psi}(0)=\widehat{\varphi}(0)+i\lambda$. It follows from \eqref{eq:phipsi} that
$\psi=\varphi+i\lambda$ thus $g(z)=e^{i\lambda}f(z)$ for all $z\in\D\subset\Omega$. As $\Omega$ is connected and
$f,g\in\hol(\Omega)$, this implies that $g(z)=e^{i\lambda}f(z)$ also holds on $\Omega$.

\subsection{Proof of Theorem \ref{thm:2circ}}


We begin with a simple observation. If $z_0\in\rho\T$ is a zero of $f$, we write $f(z)=(z-z_0)^k\tilde f(z)$
and $g(z)=(z-z_0)^j\tilde g(z)$ with nonnegative integers $j,k$ and $\tilde f,\tilde g\in\mathcal{N}$ nonvanishing at $z_0$.
Then $|f|=|g|$ on $\rho\T$ reads
$$
|(z-z_0)^k\tilde f(z)|=|(z-z_0)^j\tilde g(z)|,\qquad z\in\rho\T
$$
and this implies that $k=j$. Therefore, $f$ and $g$ have the same zeros on $\rho\T$ with the same multiplicities. We may thus write $f=Pf_1$ and $g=Pg_1$ with $P$ a polynomial which has all the zeros in $\rho\T$ and $f_1,g_1\in\mathcal{N}$ nonvanishing on $\rho\T$. Then $|f_1|=|g_1|$ on $\rho\T\cup\T$. In other words, we may assume that $f$ and $g$ do not vanish on $\rho\T$. 


Let $\{a_1,\dots,a_n\} $ and $\{b_1,\dots,b_m\}$ be the zeros of $f$ and $g$ on $\rho\D$ respectively, counted with multiplicities. For all $z\in\C$, write 
$$
P_f(z)=\prod_{i=1}^n\frac{\rho(z-a_i)}{\rho^2-\bar a_iz}\qquad\text{and}\qquad P_g(z)=\prod_{i=1}^m\frac{\rho(z-b_i)}{\rho^2-\bar b_iz}.
$$
Notice first that if $z\in\rho\T$, $|P_f(z)|=|P_g(z)|=1$. Further $\dfrac{f}{P_f}$ and $\dfrac{g}{P_g}$ do not vanish in $\rho\D$. By the Poisson-Jensen formula, for all $z\in\rho\D$ we have
\begin{align}
\label{eq:fp-gp}
\begin{split}
\log\bigg\vert\dfrac{f(z)}{P_f(z)}\bigg\vert&=\dfrac{1}{2\pi}\int_{-\pi}^{\pi}\text{Re}\left(\dfrac{\rho e^{i\theta}+z}{\rho e^{i\theta}-z}\right)\log|f(\rho e^{i\theta})|\,\d\theta\\
&=\dfrac{1}{2\pi}\int_{-\pi}^{\pi}\text{Re}\left(\dfrac{\rho e^{i\theta}+z}{\rho e^{i\theta}-z}\right)\log|g(\rho e^{i\theta})|\,\d\theta\\
&=\log\bigg\vert\dfrac{g(z)}{P_g(z)}\bigg\vert.
\end{split}
\end{align}
But then $\bigg\vert\dfrac{g(z)}{P_g(z)}\bigg\vert=\bigg\vert\dfrac{f(z)}{P_f(z)}\bigg\vert$ for all $z\in\rho\D$. Thus we get that there is some $\tilde c\in\T$ such that
$\dfrac{g(z)}{P_g(z)}=\tilde c\dfrac{f(z)}{P_f(z)}$ for all $z\in\rho\D$. Finally, as an identity between
holomorphic functions on $\D$, this is valid for all $z\in\D$. In particular, taking $z=re^{i\theta}$ and 
$r\longrightarrow 1$, we get $\dfrac{g(z)}{P_g(z)}=\tilde c\dfrac{f(z)}{P_f(z)}$ for almost every $z\in\T$. 
By \eqref{eq:2circ}, we have $\bigg\vert\dfrac{1}{P_f}\bigg\vert=\bigg\vert\dfrac{1}{P_g}\bigg\vert$ on $\T$ as well. Hence, by the following lemma, we get $\dfrac{1}{P_f}=\dfrac{c}{P_g}$ for some $c\in\T$, which implies $g=c\tilde cf$.

\begin{lemma}
	\label{lem:mero}
	Let $F$ and $G$ be meromorphic on $\C$ without poles on $\T\cup\rho\T$ for $0<\rho\ne1$. Suppose
	\begin{equation}
	\label{eq:mero-1}
	|F(\zeta)|=|G(\zeta)|\qquad\text{and}\qquad|F(\rho\zeta)|=|G(\rho\zeta)|,\qquad\zeta\in\T.
	\end{equation} 
	Then $F$ and $G$ have the same zeros and poles in $\C\setminus\{0\}$, with the same multiplicities. In particular, if $F$ and $G$ are rational functions that satisfy \eqref{eq:mero-1}, then $G=cF$ outside the poles with $c\in\T$.
\end{lemma}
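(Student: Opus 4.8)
The plan is to reduce everything to a single auxiliary function $H=F/G$, which is meromorphic on $\C$ (here $F,G\not\equiv 0$, since otherwise the hypothesis forces $F=G=0$ and the statement is vacuous), and to show that $H$ has no zeros or poles in $\C\setminus\{0\}$ beyond what the data already dictate on the two circles. The conclusion about $F$ and $G$ sharing zeros and poles is exactly the statement that $H$ is zero- and pole-free on $\C\setminus\{0\}$.

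First I would settle the circles themselves. If $\zeta_0\in\T\cup\rho\T$ is a common point where $F,G$ vanish to orders $k,j$, write $F(\zeta)=(\zeta-\zeta_0)^k a(\zeta)$ and $G(\zeta)=(\zeta-\zeta_0)^j b(\zeta)$ with $a(\zeta_0),b(\zeta_0)\neq 0$; parametrizing the circle by arclength near $\zeta_0$, the relation $|F|=|G|$ forces the leading powers of $|\zeta-\zeta_0|$ to match, so $k=j$. As there are no poles on the circles and only finitely many zeros on each compact circle, this shows $F$ and $G$ have the same zeros on $\T\cup\rho\T$ with the same multiplicities (the conclusion there), and that $H$ is holomorphic and zero-free in a neighborhood of each circle with $|H|=1$ on both. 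It then remains to treat the zeros and poles of $H$ lying in $\C\setminus\{0\}$ off the two circles.

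The heart of the argument is a double Schwarz reflection yielding a dilation symmetry. I would form $\Phi_1(z)=H(z)\,\overline{H(1/\bar z)}$ and $\Phi_2(z)=H(z)\,\overline{H(\rho^2/\bar z)}$, each meromorphic on $\C\setminus\{0\}$ because $z\mapsto 1/\bar z$ and $z\mapsto \rho^2/\bar z$ are anti-holomorphic involutions of $\C\setminus\{0\}$ and the outer conjugation restores holomorphy. On $\T$ one has $1/\bar z=z$, so $\Phi_1=|H|^2=1$ there; on $\rho\T$ one has $\rho^2/\bar z=z$, so $\Phi_2=|H|^2=1$ there. Since $H$ is holomorphic and zero-free near the circles, $\Phi_1,\Phi_2$ are holomorphic near $\T,\rho\T$ respectively, and the identity theorem gives $\Phi_1\equiv\Phi_2\equiv 1$ on the connected set $\C\setminus\{0\}$. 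Hence $\overline{H(1/\bar z)}=1/H(z)=\overline{H(\rho^2/\bar z)}$; taking conjugates and substituting $w=1/\bar z$ produces the functional equation $H(w)=H(\rho^2 w)$ for all $w\in\C\setminus\{0\}$.

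Finally I would combine this dilation symmetry with meromorphy at the origin. If $H$ had a zero or pole at some $w_0\neq 0$, then $H(w)=H(\rho^2 w)$ would force zeros or poles of the same order at every $\rho^{2k}w_0$, $k\in\mathbb{Z}$; as $\rho\neq 1$, these points are distinct and (letting $k\to+\infty$ or $k\to-\infty$) accumulate at $0$. But $F$ and $G$ are meromorphic on all of $\C$, so $H$ is meromorphic at $0$ and behaves like $z^m$ times a holomorphic factor nonvanishing at $0$; its zeros and poles cannot accumulate at $0$. This contradiction forces $H$ to be zero- and pole-free on $\C\setminus\{0\}$, which is the claim. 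In the rational case $H$ is then $cw^k$, the equation $H(w)=H(\rho^2 w)$ gives $\rho^{2k}=1$ hence $k=0$, and $|H|=1$ on $\T$ gives $c\in\T$, so $G=cF$. I expect the delicate point to be this last step: it is precisely the absence of an essential singularity at $0$ (guaranteed by meromorphy on all of $\C$, not merely on an annulus) that converts the dilation symmetry into a contradiction, since a purely annular argument would not exclude functions invariant under $w\mapsto\rho^2 w$; the preliminary order-matching on the circles is needed to make the reflection identities legitimate.
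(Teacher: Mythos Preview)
Your proof is correct and follows essentially the same route as the paper: both use Schwarz reflection across the two circles to obtain a dilation symmetry (the paper derives $m_F(z)-m_G(z)=m_F(\rho^2 z)-m_G(\rho^2 z)$ at the level of multiplicities, you obtain the stronger functional equation $H(w)=H(\rho^2 w)$ for $H=F/G$), and then argue that a nonzero zero or pole of $F/G$ would force an accumulation at $0$, contradicting meromorphy there. The only cosmetic difference is that you work directly with the quotient $H$, which is why you need the preliminary zero-matching on the circles to make $|H|=1$ well defined, whereas the paper keeps $F$ and $G$ separate and reads off multiplicities from the meromorphic identities without that step.
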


\begin{proof}
	
	Let $F$ be meromorphic on $\C$ and $z_0\in\C$. Write  $F(z)=(z-z_0)^k\tilde F(z)$ and for some $k\in\mathbb Z$ with $\tilde F(z_0)\ne 0$. Define the multiplicity $k:=m_F(z_0)$, where $k>0$ if $z_0$ is a zero of $F$, and $k<0$ if $z_0$ is a pole of $F$. In particular, for meromorphic functions $F_1,F_2$ on $\C$, we have $m_{F_1F_2}=m_{F_1}+m_{F_2}$, $m_{F_1/F_2}=m_{F_1}-m_{F_2}$ and if $F_1=F_2$ in the neighborhood of a pole $z_0$, then $m_{F_1}=m_{F_2}$.
	
	First, note that if $\rho>1$, we replace $F$ and $G$ by $f(z/\rho)$ and $g(z/\rho)$, and replace $\rho$ by $1/\rho<1$. We thus assume that $\rho<1$. Observe that \eqref{eq:mero-1} is equivalent to
	$$
	F(z)\overline{F\left(\dfrac{1}{\bar z}\right)}=	G(z)\overline{G\left(\dfrac{1}{\bar z}\right)},\qquad z\in\T
	$$
	and
	$$
	F(z)\overline{F\left(\dfrac{\rho^2}{\bar z}\right)}=G(z)\overline{G\left(\dfrac{\rho^2}{\bar z}\right)},\qquad z\in\rho\T.
	$$
	As an identity between meromorphic functions in $\C$, these equations are also valid for $z\in\C$ not a pole of any of the functions involved. As poles are isolated, we have for $z\not=0$,
	\begin{equation}
	\label{eq:mero-proof-1}
	m_F(z)+m_F\left(\dfrac{1}{\bar z}\right)=m_G(z)+m_G\left(\dfrac{1}{\bar z}\right)
	\end{equation}
	and
	\begin{equation}
	\label{eq:mero-proof-2}
	m_F(z)+m_F\left(\dfrac{\rho^2}{\bar z}\right)=m_G(z)+m_G\left(\dfrac{\rho^2}{\bar z}\right).
	\end{equation}
	Now, \eqref{eq:mero-proof-1} gives
	$$
	m_F(z)-m_G(z)=m_G\left(\dfrac{1}{\bar z}\right)-m_F\left(\dfrac{1}{\bar z}\right)=m_F(\rho^2z)-m_G(\rho^2z)
	$$
	with \eqref{eq:mero-proof-2} applied to $1/\bar z$, for $z\in\C\setminus\{0\}$. If $z_0\ne 0$ is such that
	$m_F(z_0)\not=m_G(z_0)$ then
	\begin{align*}
	0&\not=m_F(z_0)-m_G(z_0)=m_F(\rho^2z_0)-m_G(\rho^2z_0)=\cdots\\
	&=m_F(\rho^{2k}z_0)-m_G(\rho^{2k}z_0)=m_{F/G}(\rho^{2k}z_0)
	\end{align*}
	for all $k\in\N$. But then $F/G$ is meromorphic and either has $\rho^{2k}z_0$ as a zero ($m_{F/G}(\rho^{2k}z_0)>0$) or as a pole ($m_{F/G}(\rho^{2k}z_0)<0$) for every $k$. Letting $k\longrightarrow\infty$ we have $\rho^{2k}z_0\longrightarrow 0$. As $z_0\ne 0$, this contradicts the fact that zeros and poles of $F/G$ are isolated. Hence, $F$ and $G$ have the same nonzero zeros and poles with the same multiplicities.	
	
	Furthermore, if $F$ and $G$ are rational functions, then they have same zeros and poles in $\C\setminus\{0\}$,
	thus there exists $c\in\T$ and $m\in\mathbb Z$ such that $G=cz^mF$. But then
	\eqref{eq:mero-1} implies that 
	
	--- on one hand $|F(\rho e^{it})|=|c|\rho^m|F(\rho e^{it})|$ for all $t$ thus $|c|\rho^m=1$.
	
	--- on the other hand $|F(e^{it})|=|c||F(e^{it})|$ thus $|c|=1$.
	
	As $\rho<1$ this implies $m=0$ and then $G=cF$.
\end{proof}

\begin{corollary}
	Let $F,G$ be two meromorphic functions in $\C$ with no pole at $0$ such that 
	$$
	|F(\zeta)|=|G(\zeta)|\qquad\text{and}\qquad|F(\rho\zeta)|=|G(\rho\zeta)|,\qquad\zeta\in\T,\,0<\rho\not=1.
	$$
	Then there exists $c\in\mathbb{T}$ such that $G=cF$.
\end{corollary}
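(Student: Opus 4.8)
The plan is to deduce the corollary from Lemma~\ref{lem:mero}. Two features separate the two statements: the lemma forbids poles on $\T\cup\rho\T$, whereas here poles on these circles are a priori allowed; and the lemma only reaches the conclusion $G=cF$ for \emph{rational} functions, the general meromorphic case requiring an extra global argument. Throughout I may assume $F,G\not\equiv 0$, so that $H:=F/G$ is a well-defined meromorphic function on $\C$.

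First I would clear the poles lying on the circles. Since $F$ and $G$ are meromorphic, each has only finitely many poles on the compact set $\T\cup\rho\T$. I would choose a polynomial $Q$ vanishing at every pole of $F$ and of $G$ on $\T\cup\rho\T$, to an order high enough that $QF$ and $QG$ have no poles there. Multiplying the two hypotheses by $|Q|$ shows that $QF$ and $QG$ still satisfy the modulus identities on $\T$ and on $\rho\T$, and they now meet the hypotheses of Lemma~\ref{lem:mero}. The lemma then gives that $QF$ and $QG$ have the same zeros and poles in $\C\setminus\{0\}$ with the same multiplicities; cancelling the common factor $Q$, the same holds for $F$ and $G$.

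Next I would use this to describe $H=F/G$ globally. By the previous step $H$ is holomorphic and nowhere vanishing on $\C\setminus\{0\}$, with at worst a zero or a pole at the origin, of some order $m:=m_F(0)-m_G(0)\in\mathbb Z$ (finite, since neither function has a pole at $0$). Hence $z^{-m}H(z)$ extends across $0$ to a nowhere-vanishing entire function, which therefore equals $e^{h}$ for some entire $h$; that is, $H(z)=z^m e^{h(z)}$ on $\C\setminus\{0\}$. I would stress that one cannot simply take a logarithm of $H$ directly on $\C\setminus\{0\}$: since that domain is not simply connected, the winding of $H$ about $0$ must first be removed by extracting the monomial factor $z^m$, and this is precisely the step that uses that $F$ and $G$ are genuinely meromorphic rather than rational.

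Finally I would pin down $m$ and $h$ by means of the two circles. From $|F|=|G|$ we obtain $|H|=1$ on $\T$ and on $\rho\T$, so that $\log|H(z)|=m\log|z|+\mathrm{Re}\,h(z)=0$ on both circles. On $\T$ this reads $\mathrm{Re}\,h\equiv 0$; since $\mathrm{Re}\,h$ is harmonic on $\C$ and vanishes on $\T$, the maximum principle on $\overline{\D}$ forces $\mathrm{Re}\,h\equiv 0$ on $\D$, and then real-analyticity (the identity theorem for harmonic functions) propagates this to all of $\C$, so that $h\equiv i\gamma$ is constant. On $\rho\T$ the relation now becomes $m\log\rho=0$, and since $\rho\neq 1$ we conclude $m=0$. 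Therefore $H\equiv e^{i\gamma}$, i.e. $F=e^{i\gamma}G$, which is the claim with $c=e^{-i\gamma}\in\T$. The main obstacle is exactly this global representation of $H$ together with the harmonic maximum-principle argument: the unit circle is used to kill the entire factor $e^{h}$, while the second circle $\rho\T$ is what forces the monomial exponent $m$ to vanish.
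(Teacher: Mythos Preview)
Your proof is correct but follows a genuinely different route from the paper's. The paper, after invoking Lemma~\ref{lem:mero} to identify the nonzero poles of $F$ and $G$, factors out the finitely many poles lying in a neighborhood of $\overline{\D}$ so as to land in the Nevanlinna class, and then applies Theorem~\ref{thm:2circ} as a black box; the Poisson--Jensen argument inside that theorem does the real work. Your argument, by contrast, is self-contained: once Lemma~\ref{lem:mero} delivers that $F$ and $G$ share all zeros and poles in $\C\setminus\{0\}$, you write $F/G=z^me^{h}$ and use only the maximum principle for harmonic functions (on $\T$) together with the elementary relation $\rho^m=1$ (on $\rho\T$). This avoids the inner--outer factorization and the Poisson--Jensen formula entirely, so it is more elementary and would stand even without Theorem~\ref{thm:2circ}. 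You also make explicit the preliminary step of clearing possible poles on $\T\cup\rho\T$ before applying Lemma~\ref{lem:mero}, a point the paper passes over silently. What the paper's route buys is economy within the article: the corollary becomes an immediate consequence of the main theorem rather than an independent computation.
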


\begin{proof}
	As seen in the previous proof, we can assume that $\rho<1$.
	
	The previous lemma then shows that $F$ and $G$ have the same nonzero poles in $\C$. Note that there is at most a finite number of such poles in a neighborhood of $\D$. We can factor them out and write
	$F=\tilde F/P$, $G=\tilde G/P$ with $\tilde F,\tilde G$ having no pole in a neighborhood of $\D$
	and $P$ a polynomial. But then $\tilde F,\tilde G\in\mathcal{N}$ and $|\tilde F|=|\tilde G|$ on $\T\cup\rho\T$.
	The previous theorem shows that there exists $c\in\T$ such that $\tilde G=c\tilde F$ thus $G=cF$ on $\D$ and thus on $\C$.

\end{proof}

\subsection*{Acknowledgements}
The author is supported by the CHED-PhilFrance scholarship from Campus France and the Commission of Higher Education (CHED), Philippines. The author also wants to thank his supervisors in the Universit\'e de Bordeaux, Philippe Jaming and Karim Kellay, for their very helpful comments and suggestions.


\begin{thebibliography}{10}
	
\bibitem{Ak1}
{\sc E.~J.~Akutowicz},
{\em On the determination of the phase of a Fourier integral. I}, 
Trans. Amer. Math.  Soc., {\bf 83}  (1956), 179--192.

\bibitem{Ak2}
{\sc E. J. Akutowicz},
{\em On the determination of the phase of a Fourier integral. II}, 
Proc. Amer. Math. Soc., {\bf 8} (1957), 234--238.

	\bibitem{Bo}
	{\sc H.~Boche, N.~Li and V.~Pohl}, `Phase retrieval in spaces of analytic functions on the unit disk' in {\em IEEE Proc. SampTA}, 2017.
	
	\bibitem{Bod}
	{\sc B.~Bodmann and N.~Hammen}, {\em Stable phase retrieval with low redundancy frames}, 
	Adv. Comput. Math., {\bf 41} (2015), 317--331.
	
	\bibitem{Du}
	{\sc P.~Duren}, {\em The Theory of $H^p$ Spaces}, Academic Press, New York, 1970.
	
	\bibitem{Gr}
	{\sc P.~Grohs and M.~Rathmair}, {\em Stable Gabor phase retrieval and spectral clustering}, Comm. Pure Appl. Math., {\bf 72} (2019), 981--1043.
	
	\bibitem{GKR}
	{\sc P.~Grohs, S.~Koppensteiner and M.~Rathmair}, {\em Phase retrieval: uniqueness and stability}, SIAM Rev., \textbf{62} (2020), 301--350.

\bibitem{Ho}
{\sc E. Hofstetter},
{\em Construction  of  time-limited  functions  with  specified  autocorrelation  functions},  
IEEE Trans. Inform. Theory,  {\bf 10} (1964), 119--126.
	
	\bibitem{Ja}
	{\sc P.~Jaming}, {\em Uniqueness results in an extension of Pauli's phase retrieval problem}, Appl. Comput. Harmon. Anal., {\bf37} (2014), 413--441.
	
	\bibitem{JKP1}
	{\sc P.~Jaming, K.~Kellay and R.~Perez III}, {\em Phase retreival for wide band signals}, J. Fourier Anal. Appl., {\bf 26} (2020). https://doi.org/10.1007/s00041-020-09767-1
	
	\bibitem{Ka}
	{\sc Y.~Katznelson}, {\em An Introduction to Harmonic Analysis}, Dover, 1976.
	
	\bibitem{Kli}
	{\sc M.~Klibanov, P.~Sacks and A.~Tikhonravov}, {\em The phase retrieval problem}, Inverse Prob., \textbf{11} (1995), 1--28.
	
	\bibitem{Mc}
	{\sc J.~McDonald}, {\em Phase retrieval and magnitude retrieval of entire functions},  {J. Fourier Anal. Appl.}, \textbf{10} (2004), 259--267.
	
	\bibitem{Ni}
	{\sc N.~Nikolski}, {\em Hardy Spaces}, Cambridge University Press, 2019.
	
	\bibitem{MW}
	{\sc I.~Waldspurger and S.~Mallat}, {\em Phase retrieval for the Cauchy wavelet transform}, J. Fourier Anal. Appl., {\bf21} (2015), 1251--1309.
	
\bibitem{Wa}
{\sc A. Walther},
{\em The  question  of  phase  retrieval  in  optics},
Optica Acta, {\bf 10} (1963), 41--49.
	
\end{thebibliography}
\end{document}